\theoremstyle{plain} 
\newtheorem{theorem}{Theorem}
\newtheorem{lemma}[theorem]{Lemma}
\theoremstyle{definition} 
\newtheorem{definition}[theorem]{Definition}
\newtheorem{example}[theorem]{Example}
\newcommand{\R}{\ensuremath{\mathbb{R}}}
\newcommand{\T}{\ensuremath{\mathbb{T}}}
\newcommand{\N}{\ensuremath{\mathbb{N}}}
\newcommand{\C}{\ensuremath{\mathbb{C}}}
\numberwithin{equation}{section}
\numberwithin{theorem}{section}
\begin{document}

\title[Hyers--Ulam stability for quantum equations]{Hyers--Ulam stability for quantum equations}
\author[Anderson]{Douglas R. Anderson} 
\address{Department of Mathematics \\
         Concordia College \\
         Moorhead, MN 56562 USA}
\email{andersod@cord.edu}
\author[Onitsuka]{Masakazu Onitsuka}
\address{Okayama University of Science \\
Department of Applied Mathematics \\
Okayama, 700-0005, Japan}
\email{onitsuka@xmath.ous.ac.jp}

\keywords{stability, Hyers--Ulam, quantum calculus, best constant.}
\subjclass[2010]{39A06, 39A13, 39A30, 34N05}

\begin{abstract}
We introduce and study the Hyers--Ulam stability (HUS) of a Cayley quantum ($q$-difference) equation of first order, where the constant coefficient is allowed to range over the complex numbers. In particular, if this coefficient is non-zero, then the quantum equation has Hyers--Ulam stability for certain values of the Cayley parameter, and we establish the best (minimal) HUS constant in terms of the coefficient only, independent of $q$ and the Cayley parameter. If the Cayley parameter equals one half, then there is no Hyers--Ulam stability for any coefficient value in the complex plane.
\end{abstract}

\maketitle\thispagestyle{empty}


\section{introduction} 

There has been much recent interest in Hyers--Ulam stability (HUS) in relation to $h$-difference equations, differential equations, and dynamic equations on time scales. For example, see \cite{and}-\cite{bos} and \cite{jung}-\cite{satco}. However, very little work has been done for quantum equations, also known as $q$-difference equations \cite{kac}. To fill this gap, we explore the notion of HUS for first-order quantum equations with a complex coefficient and a Cayley parameter, for Cayley equations introduced in another context in \cite{cieslinski}. In Section 2, we introduce the first-order quantum Cayley equation with complex coefficient, where the Cayley parameter ranges in $[0,\frac{1}{2})$, and explore the stability of this equation. We find that when the equation has HUS, the best HUS constant can be found precisely in terms of the reciprocal modulus of the complex coefficient. In Section 3, we will see that the stability changes drastically for the Cayley value of $\frac{1}{2}$, and provide a few examples of how the stability breaks down.


\section{HUS for quantum equations with complex constant coefficient}

Let $q>1$, and define the quantum set $\T=q^{\N_0}:=\{1,q,q^2,q^3,\cdots\}$, where $\N$ is the set of natural numbers, and $\N_0:=\N\cup\{0\}$.
Let $\eta\in\left[0,\frac{1}{2}\right)$. Throughout this work, let $w\in\C$ satisfy the condition
\begin{equation}\label{wrestriction}
 w\in\C\backslash\left\{\frac{-1}{(1-\eta)(q-1)q^k},\; \frac{1}{\eta(q-1)q^k}\right\}_{k=0}^{\infty}. 
\end{equation} 
For $w$ satisfying \eqref{wrestriction}, $x:\T\rightarrow\C$, and $\eta\in\left[0,\frac{1}{2}\right)$, we consider on $\T$ the Hyers--Ulam stability of the first-order linear homogeneous Cayley quantum equation with complex coefficient given for $t\in\T$ by
\begin{equation}\label{maineq}
 D_q x(t) - w \left\langle x(t) \right\rangle_{\eta} = 0, \quad D_q x(t):=\frac{x(qt)-x(t)}{(q-1)t}, \quad \langle x(t) \rangle_{\eta}:=\eta x(qt)+(1-\eta)x(t).
\end{equation}
Note that \eqref{maineq} is the Cayley equation introduced in \cite{cieslinski} in a different context.


\begin{lemma}\label{GeneSol}
For any $w$ satisfying \eqref{wrestriction}, the general solution of \eqref{maineq} is given by
\begin{equation}\label{generalsolformx}
  x(t) = c\prod_{k=0}^{\log_q t-1} \frac{1+w(1-\eta)(q-1)q^k}{1-w\eta(q-1)q^k},
\end{equation} 
where $c\in\C$ is an arbitrary constant.
\end{lemma}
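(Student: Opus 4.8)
The plan is to convert the Cayley quantum equation \eqref{maineq} into an explicit first-order linear recurrence on $\N_0$ and then telescope. First I would insert the definitions of $D_q x(t)$ and $\langle x(t)\rangle_\eta$ into \eqref{maineq} and clear the denominator $(q-1)t$. Since $\T=q^{\N_0}$, every $t\in\T$ has the form $t=q^k$ with $k\in\N_0$, so the equation becomes
\[
 x(q^{k+1})-x(q^k)=w(q-1)q^k\bigl(\eta\,x(q^{k+1})+(1-\eta)x(q^k)\bigr),\qquad k\in\N_0,
\]
and collecting the $x(q^{k+1})$ terms on the left yields
\[
 \bigl(1-w\eta(q-1)q^k\bigr)\,x(q^{k+1})=\bigl(1+w(1-\eta)(q-1)q^k\bigr)\,x(q^k).
\]

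The next step is to observe that \eqref{wrestriction} is tailored precisely to this recurrence. The excluded value $\frac{1}{\eta(q-1)q^k}$ is exactly the $w$ for which the coefficient $1-w\eta(q-1)q^k$ of $x(q^{k+1})$ vanishes; ruling it out lets us divide and obtain
\[
 x(q^{k+1})=\frac{1+w(1-\eta)(q-1)q^k}{1-w\eta(q-1)q^k}\,x(q^k),\qquad k\in\N_0.
\]
The other excluded value $\frac{-1}{(1-\eta)(q-1)q^k}$ is exactly the $w$ for which the numerator $1+w(1-\eta)(q-1)q^k$ vanishes, so under \eqref{wrestriction} every factor appearing is a nonzero complex number (when $\eta=0$ the first family of excluded points is vacuous, the coefficient of $x(q^{k+1})$ being identically $1$). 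I would then iterate this recurrence from $k=0$: for $t=q^n\in\T$, i.e.\ $n=\log_q t\in\N_0$, a telescoping product gives
\[
 x(t)=x(1)\prod_{k=0}^{\log_q t-1}\frac{1+w(1-\eta)(q-1)q^k}{1-w\eta(q-1)q^k},
\]
which is \eqref{generalsolformx} upon setting $c:=x(1)$, the empty product at $t=1$ correctly yielding $x(1)=c$.

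To finish, I would check the two directions of ``general solution.'' For existence, reading the computation backwards shows that for each $c\in\C$ the function defined by \eqref{generalsolformx} satisfies \eqref{maineq} at every $t\in\T$. For the claim that these are \emph{all} solutions, note that the recurrence determines $x(q^{k+1})$ uniquely from $x(q^k)$, so any solution of \eqref{maineq} is completely determined by the single value $x(1)$; hence the solution set is exactly the one-parameter family \eqref{generalsolformx}. I do not expect a genuine obstacle here, as the argument is elementary; the only points deserving care are the index bookkeeping around $\log_q t-1$ (with the empty-product convention) and verifying that \eqref{wrestriction} is exactly — neither more nor less than — what is needed both to perform the division and to guarantee that no factor in the product is zero.
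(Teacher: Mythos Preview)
Your proposal is correct and follows essentially the same approach as the paper: rewrite \eqref{maineq} as the first-order recurrence $x(qt)=\dfrac{1+w(1-\eta)(q-1)t}{1-w\eta(q-1)t}\,x(t)$, then telescope to obtain \eqref{generalsolformx}. The paper additionally verifies the formula by direct substitution into \eqref{maineq}, whereas you argue both directions (existence and that these are \emph{all} solutions) via the recurrence; your treatment of the role of \eqref{wrestriction} is also more explicit than the paper's.
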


\begin{proof}
Let $w$ satisfy \eqref{wrestriction}. Working directly from \eqref{maineq}, if we rewrite it to interpret it as a recurrence relation, then
\[ x(qt) = \left(\frac{1+w(1-\eta)(q-1)t}{1-w\eta(q-1)t}\right)x(t), \quad t\in\T=q^{\N_0}, \quad q>1. \]
Solving this for $x(t)$, we arrive at \eqref{generalsolformx}, which is a well-defined function on $\T$ for $w$ satisfying \eqref{wrestriction}. Substituting form \eqref{generalsolformx} back into \eqref{maineq} as a check,
\begin{eqnarray*}
 \lefteqn{D_q x(t) - w \left\langle x(t) \right\rangle_{\eta}} \\
  &=& \frac{x(qt)-x(t)}{(q-1)t} - w\left[\eta x(qt)+(1-\eta)x(t)\right] \\
  &=& \left[\left(\frac{1}{(q-1)t}-w\eta\right)\left(\frac{1+w(1-\eta)(q-1)t}{1-w\eta(q-1)t}\right) - \left(\frac{1}{(q-1)t}+w(1-\eta)\right) \right] x(t) \\
	&=& \left[\left(\frac{1-w\eta(q-1)t}{(q-1)t}\right)\left(\frac{1+w(1-\eta)(q-1)t}{1-w\eta(q-1)t}\right) - \left(\frac{1+w(1-\eta)(q-1)t}{(q-1)t}\right) \right] x(t) \\
	&=& 0.
\end{eqnarray*}
This establishes the result and ends the proof.
\end{proof}


\begin{definition}
The quantum equation \eqref{maineq} has Hyers--Ulam stability (HUS) if and only if there exists a constant $K>0$ with the following property:
\begin{quote}
For an arbitrary $\varepsilon>0$, if a function $\phi:\T\rightarrow\C$ satisfies 
\begin{equation}\label{phiepineq}
 \left| D_q\phi(t) - w \left\langle \phi(t)\right\rangle_{\eta}\right| \le \varepsilon
\end{equation} 
for all $t\in\T$, then there exists a solution $x:\T\rightarrow\C$ of \eqref{maineq} such that 
$$|\phi(t)-x(t)|\le K\varepsilon$$ 
for all $t\in\T$. 
\end{quote}
Such a constant $K$ is called an HUS constant for \eqref{maineq} on $\T$.
\end{definition}


\begin{lemma}\label{VofP}
Fix $q>1$. Let $w$ satisfy \eqref{wrestriction}, and let $\eta\in\left[0,\frac{1}{2}\right)$. For an arbitrary $\varepsilon>0$, if a function $\phi:\T\rightarrow\C$ satisfies the inequality \eqref{phiepineq}, then for $t\in\T$, $\phi$ has the form $\phi:=PS+cP$, where
\begin{equation}\label{phiform} 
   P(t)=\prod_{k=0}^{\log_q t-1} \frac{1+w(1-\eta)(q-1)q^k}{1-w\eta(q-1)q^k}, 
	 \quad S(t)=\sum_{m=0}^{\log_q t-1} \frac{(q-1)q^mE(q^m)}{[1+w(1-\eta)(q-1)q^m]P(q^{m})}, 
\end{equation}
$c\in\C$ is an arbitrary constant, and the function $E$ satisfies $|E(t)|\le\varepsilon$ for all $t\in\T$.
\end{lemma}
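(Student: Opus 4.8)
The plan is to identify $\phi$ as a solution of the inhomogeneous version of \eqref{maineq} whose forcing term is the defect, and then to solve that inhomogeneous quantum recurrence by variation of parameters against the homogeneous solution $P$ produced in Lemma \ref{GeneSol}.

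First I would define $E(t):=D_q\phi(t)-w\langle\phi(t)\rangle_{\eta}$ for $t\in\T$. By the hypothesis \eqref{phiepineq}, this $E$ satisfies $|E(t)|\le\varepsilon$ on all of $\T$, which is the final assertion of the lemma and requires nothing further. Unwinding the definitions of $D_q$ and $\langle\cdot\rangle_{\eta}$ exactly as in the proof of Lemma \ref{GeneSol}, and using that \eqref{wrestriction} forces $1-w\eta(q-1)t\neq0$ on $\T$, the relation $D_q\phi(t)-w\langle\phi(t)\rangle_{\eta}=E(t)$ rearranges into the first-order recurrence
\[
 \phi(qt)=\left(\frac{1+w(1-\eta)(q-1)t}{1-w\eta(q-1)t}\right)\phi(t)+\frac{(q-1)t}{1-w\eta(q-1)t}\,E(t),\qquad t\in\T.
\]

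Next I would set $\phi=Pv$ with $P$ as in \eqref{phiform}; this is legitimate since \eqref{wrestriction} guarantees that $P$ is well defined and nonvanishing on $\T$, with $P(1)=1$ and $P(qt)=\frac{1+w(1-\eta)(q-1)t}{1-w\eta(q-1)t}P(t)$. Substituting this into the recurrence and cancelling the common factor $P(qt)$ collapses it to the telescoping identity
\[
 v(qt)-v(t)=\frac{(q-1)t\,E(t)}{\bigl[1+w(1-\eta)(q-1)t\bigr]P(t)},\qquad t\in\T,
\]
whose right-hand side is well defined because $w\neq\frac{-1}{(1-\eta)(q-1)q^{k}}$ for every $k\ge0$. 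Evaluating this identity at the points $q^{0},q^{1},\dots,q^{\log_q t-1}$ and adding, the left-hand side telescopes to $v(t)-v(1)$, while the right-hand side becomes precisely the sum $S(t)$ of \eqref{phiform} (with $S(1)=0$ the empty sum). Hence $v(t)=c+S(t)$ where $c:=v(1)=\phi(1)/P(1)=\phi(1)\in\C$, and therefore $\phi(t)=P(t)v(t)=P(t)S(t)+cP(t)$, i.e. $\phi=PS+cP$, as claimed.

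The only point requiring care is the bookkeeping of the index ranges after the shift $t\mapsto qt$: one must check that dividing the forcing coefficient $\frac{(q-1)q^{m}}{1-w\eta(q-1)q^{m}}$ by the growth factor $\frac{1+w(1-\eta)(q-1)q^{m}}{1-w\eta(q-1)q^{m}}$ and by $P(q^{m})$ leaves exactly the summand $\frac{(q-1)q^{m}E(q^{m})}{[1+w(1-\eta)(q-1)q^{m}]P(q^{m})}$, and that condition \eqref{wrestriction} is precisely what keeps every denominator occurring in $P$, in the recurrence, and in $S$ away from zero. Beyond this routine verification I do not anticipate any genuine obstacle.
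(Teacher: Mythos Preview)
Your proposal is correct and follows essentially the same approach as the paper: define the defect $E$, rewrite the relation as a first-order inhomogeneous recurrence for $\phi(qt)$, and solve it by variation of parameters against the homogeneous solution $P$. The only difference is cosmetic: the paper states the recurrence and asserts the solution $\phi=PS+cP$ without writing out the substitution $\phi=Pv$ and the telescoping sum, and then adds a direct verification that this form satisfies \eqref{phiepineq}; your version spells out the solving step explicitly and omits the (redundant) converse check.
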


\begin{proof}
Suppose a function $\phi:\T\rightarrow\C$ satisfies the inequality \eqref{phiepineq}. Then, 
\[ D_q \phi(t) - w \left\langle \phi(t) \right\rangle_{\eta} = E(t) \]
for some function $E$ that satisfies $|E(t)|\le\varepsilon$ for all $t\in\T$. Expanding this out and solving for $\phi(qt)$, we have
\[ \phi(qt) = \left(\frac{1+w(1-\eta)(q-1)t}{1-w\eta(q-1)t}\right)\phi(t) + \frac{(q-1)tE(t)}{1-w\eta(q-1)t}, \quad t\in\T. \]
Solving this non-homogeneous recurrence relation for $\phi(t)$, we get $\phi=PS+cP$, where $P$ and $S$ are given in \eqref{phiform}, and $c\in\C$ is an arbitrary constant. Conversely, let $P$ be given as in \eqref{phiform}. By Lemma \ref{GeneSol}, $x=cP$ is the form of the general solution of \eqref{maineq}.
Let $\phi$ be of the form $PS+cP$, for $P$ and $S$ as in \eqref{phiform}. Then, as the operators are linear,
\begin{eqnarray*}
 \lefteqn{D_q \phi(t) - w \left\langle \phi(t) \right\rangle_{\eta}} \\
  &=&  D_q [P(t)S(t)+cP(t)] - w \left\langle P(t)S(t)+cP(t) \right\rangle_{\eta} \\
  &=& \frac{P(qt)S(qt)-P(t)S(t)}{(q-1)t} - w\left[\eta P(qt)S(qt) + (1-\eta)P(t)S(t)\right]+0 \\
  &=& \left(\frac{1-w\eta(q-1)t}{(q-1)t}\right)P(qt)S(qt) - \left(\frac{1+w(1-\eta)(q-1)t}{(q-1)t}\right)P(t)S(t) \\
  &=&  \left(\frac{1-w\eta(q-1)t}{(q-1)t}\right)\left(\frac{1+w(1-\eta)(q-1)t}{1-w\eta(q-1)t}\right)P(t)S(qt) \\
	& & - \left(\frac{1+w(1-\eta)(q-1)t}{(q-1)t}\right)P(t)S(t) \\		
  &=& \left(\frac{1+w(1-\eta)(q-1)t}{(q-1)t}\right)P(t)[S(qt) - S(t)] \\
	&=& \left(\frac{1+w(1-\eta)(q-1)t}{(q-1)t}\right)P(t)\left[\frac{(q-1)tE(t)}{\left(1+w(1-\eta)(q-1)t\right)P(t)}\right] \\
	&=& E(t).
\end{eqnarray*}
As $E$ satisfies $|E(t)|\le\varepsilon$ for all $t\in\T$, the function $\phi$ given in \eqref{phiform} satisfies \eqref{phiepineq}, completing the proof.
\end{proof}

The following lemma shows that \eqref{maineq} is unstable in the Hyers--Ulam sense when $w=0$.


\begin{lemma}\label{noHUS}
Let $q>1$ and $\eta\in\left[0,\frac{1}{2}\right)$. If $w=0$, then \eqref{maineq} is not Hyers--Ulam stable. 
\end{lemma}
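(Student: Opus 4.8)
The plan is to show directly that no finite HUS constant can work when $w=0$, by exhibiting, for each candidate constant $K$, an approximate solution $\phi$ whose distance from every genuine solution exceeds $K\varepsilon$. When $w=0$, equation \eqref{maineq} reduces to $D_q x(t)=0$, whose general solution is simply $x(t)\equiv c$ for a constant $c\in\C$; equivalently $P(t)\equiv 1$ in the notation of Lemma \ref{VofP}. So the task is to find $\phi:\T\to\C$ with $|D_q\phi(t)|\le\varepsilon$ for all $t\in\T$ but $\sup_{t\in\T}|\phi(t)-c|$ arbitrarily large for every constant $c$.

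The natural choice is to take $E(t)\equiv\varepsilon$ (a constant perturbation) and use the representation from Lemma \ref{VofP}: with $w=0$ we get $\phi(t)=S(t)+c = \varepsilon\sum_{m=0}^{\log_q t-1}(q-1)q^m + c$. Since $\sum_{m=0}^{n-1}(q-1)q^m = q^n-1$, this gives $\phi(q^n) = \varepsilon(q^n-1)+c$, which grows without bound in $n$. Then for any constant solution $x\equiv c'$, $|\phi(q^n)-c'| = |\varepsilon(q^n-1)+c-c'|\to\infty$ as $n\to\infty$, so no single $K$ can satisfy $|\phi(t)-x(t)|\le K\varepsilon$ for all $t$. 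I would phrase this cleanly: fix $\varepsilon=1$ (say), exhibit this explicit $\phi$, note it satisfies \eqref{phiepineq} since $D_q\phi(t)=E(t)$ with $|E(t)|=1$, and observe that for every solution $x$ of \eqref{maineq}, i.e.\ every constant function, $|\phi(t)-x(t)|$ is unbounded on $\T$; hence no HUS constant $K$ exists.

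The only mild subtlety — and the one place to be careful — is the logical structure of the negation: HUS failing means that for \emph{every} $K>0$ there exists $\varepsilon>0$ and an $\varepsilon$-approximate solution $\phi$ with no nearby exact solution. Because the perturbation $\phi=S+c$ scales linearly with $\varepsilon$ (being built from $E$ with $|E|\le\varepsilon$), it suffices to produce one bad $\phi$ for one fixed $\varepsilon$, say $\varepsilon=1$; the unboundedness of $\phi-x$ then defeats any proposed $K$ simultaneously. I expect no real obstacle here: the argument is essentially the discrete analogue of the fact that $y'=0$ is not Hyers--Ulam stable on an unbounded domain, and Lemma \ref{VofP} already hands us the variation-of-parameters formula that makes the growth explicit. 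The main thing to get right is simply to state the quantifiers in the definition carefully and to verify the elementary geometric-sum identity $\sum_{m=0}^{n-1}(q-1)q^m=q^n-1$.
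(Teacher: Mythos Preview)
Your proposal is correct and essentially identical to the paper's proof: the paper also takes $E(t)=\varepsilon P(t)$, observes that $P(t)\equiv 1$ when $w=0$ (so this is your $E\equiv\varepsilon$), invokes Lemma \ref{VofP} to get $\phi=S+c$, and notes that $S(t)=\varepsilon(q-1)\sum_{m=0}^{\log_q t-1}q^m$ diverges, forcing $|\phi(t)-cP(t)|\to\infty$ for every $c$. The only cosmetic difference is that you compute the geometric sum explicitly as $q^n-1$, whereas the paper simply asserts divergence.
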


\begin{proof}
Since $w=0$, $w$ satisfies \eqref{wrestriction}.
Let an arbitrary $\varepsilon>0$ be given, and for $t\in\T$ let $\phi$ have the form \eqref{phiform} with $w=0$ and $E(t)=\varepsilon P(t)$.
As $P$ has the form \eqref{generalsolformx} with $c=1$, and
\[ \left|\frac{1+w(1-\eta)(q-1)q^k}{1-w\eta(q-1)q^k}\right| = 1 \]
for all $k$, we have that $P$ is a solution of \eqref{maineq} with $P(t)\equiv 1$ for all $t\in\T$, and thus $|E(t)|=\varepsilon$ in this instance. Then, as in the proof of Lemma \ref{VofP} with $w=0$ and $E(t)=\varepsilon P(t)$, it follows that
\begin{eqnarray*}
 D_q \phi(t) = E(t),
\end{eqnarray*}
and thus $\phi$ satisfies the equality
\[ \left|D_q \phi(t)\right| = \varepsilon \]
for all $t\in\T$. If $w=0$, then 
$$ S(t)=\varepsilon(q-1)\sum_{m=0}^{\log_q t-1} q^m $$
diverges as $t\rightarrow\infty$.
Since \eqref{generalsolformx} contains the form of the general solution of \eqref{maineq}, then
\[ |\phi(t)-x(t)| = |P(t)S(t)-cP(t)| = |S(t)-c| \rightarrow \infty \]
as $t\rightarrow\infty$ for $t\in\T$ and for any $c\in\C$. In this case, by definition \eqref{maineq} lacks HUS on $\T$. 
\end{proof}


\begin{lemma}\label{ConvS}
Let $\varepsilon>0$, $q>1$, $\eta\in\left[0,\frac{1}{2}\right)$, and let $w\in\C\backslash\{0\}$ satisfy \eqref{wrestriction}. Let $P$ and $S$ be the functions defined by \eqref{phiform}. Suppose that the function $E$ satisfies $|E(t)|\le\varepsilon$ for all $t\in\T$. Then $\lim_{t\to\infty}S(t)$ exists, $\lim_{t\to\infty}|P(t)|=\infty$, and 
\begin{equation}\label{1overw}
 P(t)\sum_{m=\log_q t}^{\infty} \frac{(q-1)q^m}{[1+w(1-\eta)(q-1)q^m]P(q^{m})} \equiv \frac{1}{w}
\end{equation}
for all $t\in\T$. 
\end{lemma}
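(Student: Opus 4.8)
The plan is to analyze the ratio of consecutive factors in the product $P$ in order to understand its growth. Writing $P(q^{n}) = \prod_{k=0}^{n-1} r_k$ with $r_k = \frac{1+w(1-\eta)(q-1)q^k}{1-w\eta(q-1)q^k}$, I would first observe that since $w\neq 0$ and $\eta\in[0,\frac12)$, for large $k$ the factor $r_k$ behaves like $\frac{w(1-\eta)(q-1)q^k}{-w\eta(q-1)q^k} = -\frac{1-\eta}{\eta}$ when $\eta>0$ (of modulus strictly greater than $1$ since $\eta<\frac12$), and like $w(1-\eta)(q-1)q^k$ (of modulus tending to $\infty$) when $\eta=0$. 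In either case $|r_k|\to L$ with $L>1$ (allowing $L=\infty$), so for all sufficiently large $k$ we have $|r_k|\ge \rho$ for some fixed $\rho>1$; this forces $|P(q^n)|\to\infty$, giving the second claim. For the first claim, the general term of $S$ has modulus bounded by $\varepsilon \frac{(q-1)q^m}{|1+w(1-\eta)(q-1)q^m|\,|P(q^m)|}$; the first fraction $\frac{(q-1)q^m}{|1+w(1-\eta)(q-1)q^m|}$ is bounded (it tends to $\frac{1}{|w|(1-\eta)}$), so the general term is $O(1/|P(q^m)|)$, which decays geometrically, and hence $S(t)$ converges absolutely as $t\to\infty$.

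For the identity \eqref{1overw}, the key algebraic observation is a telescoping identity. From the recurrence $P(q^{m+1}) = r_m P(q^m)$ one computes
\begin{equation*}
\frac{(q-1)q^m}{[1+w(1-\eta)(q-1)q^m]P(q^m)} = \frac{1}{w}\left(\frac{1}{P(q^m)} - \frac{1}{P(q^{m+1})}\right),
\end{equation*}
which I would verify by clearing denominators: the right side equals $\frac{1}{w}\cdot\frac{P(q^{m+1})-P(q^m)}{P(q^m)P(q^{m+1})} = \frac{1}{w}\cdot\frac{r_m-1}{r_m P(q^m)}$, and a direct computation gives $\frac{r_m-1}{w} = \frac{(q-1)q^m}{1-w\eta(q-1)q^m}$, while $r_m = \frac{1+w(1-\eta)(q-1)q^m}{1-w\eta(q-1)q^m}$, so the quotient is exactly $\frac{(q-1)q^m}{1+w(1-\eta)(q-1)q^m}$ as needed. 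Therefore the tail sum telescopes:
\begin{equation*}
\sum_{m=\log_q t}^{\infty} \frac{(q-1)q^m}{[1+w(1-\eta)(q-1)q^m]P(q^m)} = \frac{1}{w}\left(\frac{1}{P(t)} - \lim_{m\to\infty}\frac{1}{P(q^m)}\right) = \frac{1}{w}\cdot\frac{1}{P(t)},
\end{equation*}
where the last equality uses $|P(q^m)|\to\infty$. Multiplying through by $P(t)$ yields \eqref{1overw}.

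I would present the steps in this order: (1) establish $|P(q^n)|\to\infty$ via the lower bound on $|r_k|$ for large $k$, handling the cases $\eta=0$ and $\eta\in(0,\frac12)$ separately; (2) prove the telescoping identity for the general term of $S$; (3) deduce convergence of $S(t)$ from the geometric decay of its terms (or alternatively directly from the telescoping plus step (1)); (4) pass to the limit in the telescoped tail sum to obtain \eqref{1overw}. The main obstacle is step (1): one must be careful that the restriction \eqref{wrestriction} guarantees no factor $r_k$ vanishes or has a vanishing denominator, and that the asymptotic modulus $L = \frac{1-\eta}{\eta}$ (or $\infty$) is genuinely $>1$ — this is exactly where $\eta<\frac12$ is used, and it is the crux of why HUS holds in this regime. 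Everything else is bookkeeping with the telescoping identity.
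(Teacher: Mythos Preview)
Your proof is correct, and for the identity \eqref{1overw} you take a genuinely different and more direct route than the paper.

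For $|P(t)|\to\infty$ and the convergence of $S$, your argument and the paper's are essentially equivalent: the paper computes the same limit $|r_k|\to\frac{1-\eta}{\eta}$ (or $\infty$ when $\eta=0$) to get $|P(t)|\to\infty$, and uses the ratio test on the terms of $S$ where you use a direct comparison with $1/|P(q^m)|$; both amount to the same geometric decay.

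Where the approaches differ is the identity \eqref{1overw}. The paper does not spot the telescoping. Instead it sets $\psi(t)$ equal to the left-hand side, first proves by a separate estimate that $|\psi(t)|$ is bounded on $\T$, and then computes directly that $\psi$ satisfies the inhomogeneous equation $D_q\psi(t)-w\langle\psi(t)\rangle_\eta=-1$. Since the constant $\frac{1}{w}$ is also a solution of this equation, the superposition principle and Lemma~\ref{GeneSol} force $\psi(t)=cP(t)+\frac{1}{w}$ for some $c\in\C$, and boundedness of $\psi$ together with $|P(t)|\to\infty$ gives $c=0$. Your telescoping identity
\[
\frac{(q-1)q^m}{[1+w(1-\eta)(q-1)q^m]P(q^m)}=\frac{1}{w}\left(\frac{1}{P(q^m)}-\frac{1}{P(q^{m+1})}\right)
\]
bypasses all of this: it is shorter, entirely elementary, and yields convergence of the tail and the value $\frac{1}{wP(t)}$ in one stroke (indeed, it could also replace the separate argument for convergence of $S$). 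The paper's approach, by contrast, is more structural---it identifies $\psi$ as a particular solution of the associated inhomogeneous quantum equation---which is conceptually in keeping with the rest of the paper and may transfer more readily to variable-coefficient or higher-order settings where an explicit telescoping is not available.
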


\begin{proof}
First, we prove the $\lim_{t\to\infty}S(t)$ exists. We only have to prove that the infinite series
\[ \sum_{m=0}^{\infty} \frac{(q-1)q^mE(q^m)}{[1+w(1-\eta)(q-1)q^m]P(q^{m})} \]
converges absolutely. Let
\begin{equation}
 a_m := \left|\frac{(q-1)q^m}{[1+w(1-\eta)(q-1)q^m]P(q^{m})}\right|
\end{equation}
for $m\in \N_0$. Then, we have
\[ \frac{a_{m+1}}{a_m} = q\left|\frac{P(q^{m})}{P(q^{m+1})}\frac{1+w(1-\eta)(q-1)q^m}{1+w(1-\eta)(q-1)q^{m+1}}\right| 
= q\left|\frac{1-w\eta(q-1)q^m}{1+w(1-\eta)(q-1)q^{m+1}}\right|. \]
Since $w\neq0$ and $\eta\in\left[0,\frac{1}{2}\right)$, we get
\[ \lim_{m\to\infty}\frac{a_{m+1}}{a_m}= \lim_{m\to\infty}q\left|\frac{1-w\eta(q-1)q^m}{1+w(1-\eta)(q-1)q^{m+1}}\right| = \frac{\eta}{1-\eta}<1. \]
Thus, by using the D'Alembert criterion (ratio test), we see that the infinite series $\sum_{m=0}^{\infty} a_m$ converges absolutely. Since
\[ \left|\sum_{m=0}^{\infty} \frac{(q-1)q^mE(q^m)}{[1+w(1-\eta)(q-1)q^m]P(q^{m})}\right| \le \varepsilon\sum_{m=0}^{\infty} a_m < \infty \]
holds, we conclude that $\sum_{m=0}^{\infty} \frac{(q-1)q^mE(q^m)}{[1+w(1-\eta)(q-1)q^m]P(q^{m})}$ converges absolutely. That is, $\lim_{t\to\infty}S(t)$ exists. 

For $\eta\in\left[0,\frac{1}{2}\right)$ and $w\ne 0$,
\[ \lim_{k\rightarrow\infty}\left|\frac{1+w(1-\eta)(q-1)q^k}{1-w\eta(q-1)q^k}\right| = \begin{cases} \frac{1-\eta}{\eta} > 1 :& \eta\in\left(0,\frac{1}{2}\right), \\ \infty :& \eta=0. \end{cases} \] 
Thus, we get $\lim_{t\to\infty}|P(t)|=\infty$. 

Next, we will prove
\begin{equation}\label{kappadef}
 \kappa(t) := \left|P(t)\sum_{m=\log_q t}^{\infty} \frac{(q-1)q^m}{[1+w(1-\eta)(q-1)q^m]P(q^{m})}\right| 
\end{equation}
is bounded above on $\T$. Since
\[ \lim_{t\to\infty}\left|\frac{1-w\eta(q-1)q^k t}{1+w(1-\eta)(q-1)q^k t}\right| = \frac{\eta}{1-\eta}<1,\quad k\in\N_0 \]
and
\[ \lim_{t\to\infty}\frac{1}{\left|\frac{1}{(q-1)q^m t}+w(1-\eta)\right|} = \frac{1}{|w(1-\eta)|}>0,\quad m\in\N_0 \]
hold, for any $\eta\in\left[0,\frac{1}{2}\right)$, there exist $T\in\T$, $\frac{\eta}{1-\eta}<\delta<1$ and $\gamma > \frac{1}{|w(1-\eta)|}$ such that
\[ \left|\frac{1-w\eta(q-1)q^k t}{1+w(1-\eta)(q-1)q^k t}\right| \le \delta \quad\text{and}\quad \frac{1}{\left|\frac{1}{(q-1)q^m t}+w(1-\eta)\right|} \le \gamma \]
for $t\ge T$, $t\in\T$, $k\in\N_0$ and $m\in\N_0$. Using this and the inequality
\begin{eqnarray*}
 \kappa(t) &\le& |P(t)|\sum_{m=\log_q t}^{\infty} \frac{1}{\left|\frac{1}{(q-1)q^m}+w(1-\eta)\right||P(q^{m})|}\\
  &=& |P(t)|\Bigg(\frac{1}{\left|\frac{1}{(q-1)t}+w(1-\eta)\right||P(t)|}+\frac{1}{\left|\frac{1}{(q-1)qt}+w(1-\eta)\right||P(qt)|}\\
  & &+\frac{1}{\left|\frac{1}{(q-1)q^2t}+w(1-\eta)\right||P(q^2t)|}+\cdots \Bigg)\\
  &=& \sum_{m=0}^{\infty}\left(\frac{1}{\left|\frac{1}{(q-1)q^m t}+w(1-\eta)\right|}\prod_{k=0}^{m-1}\left|\frac{1-w\eta(q-1)q^k t}{1+w(1-\eta)(q-1)q^k t}\right| \right)
\end{eqnarray*}
for $t\in\T$, we see that
\[ \kappa(t) \le \gamma\sum_{m=0}^{\infty}\left(\prod_{k=0}^{m-1}\delta \right) = \gamma\sum_{m=0}^{\infty}\delta^m = \frac{\gamma}{1-\delta} \]
for $t\ge T$ and $t\in\T$. This means that $\kappa(t)$ is bounded above on $\T$. 

Last, we will prove
\[ \psi(t) := P(t)\sum_{m=\log_q t}^{\infty} \frac{(q-1)q^m}{[1+w(1-\eta)(q-1)q^m]P(q^{m})} \equiv \frac{1}{w} \]
for all $t\in\T$, where $|\psi|=\kappa$ for $\kappa$ defined earlier in \eqref{kappadef}. First, note that
\[ P(qt) = \frac{1+w(1-\eta)(q-1)t}{1-w\eta(q-1)t}P(t) \]
and
\begin{eqnarray*}
 \sum_{m=\log_q t+1}^{\infty} \frac{(q-1)q^m}{\left[1+w(1-\eta)(q-1)q^m\right]P(q^{m})} 
&=& -\frac{(q-1)t}{\left[1+w(1-\eta)(q-1)t\right]P(t)} + \frac{\psi(t)}{P(t)},
\end{eqnarray*}
yielding
\begin{eqnarray*}
 \psi(qt) &=& P(qt)\sum_{m=\log_q t+1}^{\infty} \frac{(q-1)q^m}{\left[1+w(1-\eta)(q-1)q^m\right]P(q^{m})} \\
  &=& \frac{-(q-1)t}{1-w\eta(q-1)t} +\frac{1+w(1-\eta)(q-1)t}{1-w\eta(q-1)t}\psi(t).
\end{eqnarray*}
Consequently, we have
\begin{eqnarray*}
 D_q\psi(t) &=& \frac{\psi(qt)-\psi(t)}{(q-1)t} \\
  &=& \frac{1}{(q-1)t}\left[\frac{-(q-1)t}{1-w\eta(q-1)t} + \frac{1+w(1-\eta)(q-1)t}{1-w\eta(q-1)t}\psi(t)-\psi(t) \right]\\
  &=& \frac{1}{(q-1)t}\left[\frac{-(q-1)t}{1-w\eta(q-1)t} + \frac{w(q-1)t\psi(t)}{1-w\eta(q-1)t}\right]\\
  &=& \frac{w\psi(t)-1}{1-w\eta(q-1)t}.
\end{eqnarray*}
Moreover, we see that
\begin{eqnarray*}
 w \left\langle \psi(t) \right\rangle_{\eta} &=& w\eta \psi(qt) +w(1-\eta)\psi(t) \\
 &=& \frac{w\psi(t)-w\eta(q-1)t}{1-w\eta(q-1)t},
\end{eqnarray*}
so that 
\[ D_q\psi(t) - w \left\langle \psi(t) \right\rangle_{\eta} = \frac{w\psi(t)-1}{1-w\eta(q-1)t} - \frac{w\psi(t)-w\eta(q-1)t}{1-w\eta(q-1)t}=-1. \]
This means that $\psi$ is a solution of $D_q\psi(t)-w\left\langle\psi(t)\right\rangle_{\eta}=-1$. Since $\frac{1}{w}$ is also a 
(constant) solution of this equation, by Lemma \ref{GeneSol} and the superposition principle, we can rewrite $\psi$ as
\[ \psi(t) = cP(t)+\frac{1}{w} \]
for some $c\in\C$. Suppose that $c\ne 0$. Using the earlier part of this proof, we see that 
$$\lim_{t\to\infty}|\psi(t)| = \lim_{t\to\infty}\kappa(t) = \infty, $$ 
which contradicts the boundedness of $\kappa=|\psi|$. Thus, $c=0$, yielding $\psi(t) \equiv \frac{1}{w}$. 
This completes the proof.
\end{proof}


\begin{theorem}\label{mainthm}
Let $q>1$, $\eta\in\left[0,\frac{1}{2}\right)$, and let $w\in\C\backslash\{0\}$ satisfy \eqref{wrestriction}. Let $P$ be the function defined by \eqref{phiform}. Let $\varepsilon>0$ be a fixed arbitrary constant, and let $\phi$ be a function on $\T$ satisfying the inequality
\[ |D_q\phi(t) - w \left\langle \phi(t)\right\rangle_{\eta}|\le\varepsilon, \qquad t\in\T. \]
Then, $\displaystyle\lim_{t\to\infty}\frac{\phi(t)}{P(t)}$ exists, and the function $x$ given by 
\[ x(t):=\left(\lim_{t\to\infty}\frac{\phi(t)}{P(t)}\right)P(t) \] 
is the unique solution of \eqref{maineq} with 
$$|\phi(t)-x(t)| \le \frac{\varepsilon}{|w|} $$ 
for all $t\in\T$.
\end{theorem}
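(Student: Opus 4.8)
The plan is to chain together the two structural lemmas already in place. First I would invoke Lemma~\ref{VofP}: since $\phi$ satisfies the $\varepsilon$-inequality, there exist a function $E$ with $|E(t)|\le\varepsilon$ on $\T$ and a constant $c\in\C$ such that $\phi=PS+cP$, with $P,S$ as in \eqref{phiform}; dividing by $P$ gives $\phi(t)/P(t)=S(t)+c$. By Lemma~\ref{ConvS} the limit $\lim_{t\to\infty}S(t)$ exists (this is exactly the place where $w\ne0$ is used), so $L:=\lim_{t\to\infty}\phi(t)/P(t)=c+\lim_{t\to\infty}S(t)$ exists as well; this is the first assertion. Put $x:=LP$. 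By Lemma~\ref{GeneSol} the general solution of \eqref{maineq} is $cP$ with $c\in\C$ arbitrary, so in particular $x=LP$ is a solution of \eqref{maineq}.

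The core step is estimating $\phi-x$. Since $\phi=P(S+c)$, $x=LP$, and $L-c=\lim_{s\to\infty}S(s)$, the quantity $S(t)+c-L$ is exactly minus the tail of the series defining $S$, so
\[
 \phi(t)-x(t)=P(t)\bigl(S(t)+c-L\bigr)=-\,P(t)\sum_{m=\log_q t}^{\infty}\frac{(q-1)q^m E(q^m)}{[1+w(1-\eta)(q-1)q^m]P(q^m)}.
\]
Applying $|E(q^m)|\le\varepsilon$ and then the exact evaluation \eqref{1overw} of Lemma~\ref{ConvS},
\[
 |\phi(t)-x(t)|\ \le\ \varepsilon\left|P(t)\sum_{m=\log_q t}^{\infty}\frac{(q-1)q^m}{[1+w(1-\eta)(q-1)q^m]P(q^m)}\right|\ =\ \frac{\varepsilon}{|w|},\qquad t\in\T.
\]
The role of \eqref{1overw} is to collapse the full $P(t)$-weighted tail into the single number $1/w$, so that the only factor left is the $\varepsilon$ coming from the bound on $E$; the absolute convergence established in Lemma~\ref{ConvS} is what makes reindexing the tail and pulling the $\varepsilon$ through the sum legitimate.

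Finally, uniqueness: every solution of \eqref{maineq} has the form $\widetilde c\,P$ for some $\widetilde c\in\C$ (Lemma~\ref{GeneSol}), and $|\phi(t)-\widetilde c P(t)|=|P(t)|\,|S(t)+c-\widetilde c|$; since $|P(t)|\to\infty$ while $S(t)+c\to L$ (Lemma~\ref{ConvS}), this remains bounded — in particular $\le\varepsilon/|w|$ — only if $\widetilde c=L$, so $x=LP$ is the unique solution within distance $\varepsilon/|w|$ of $\phi$ (indeed, the only one within any bounded distance). I expect the one genuinely delicate point to be this core estimate: recognizing $\phi-x$ as the tail remainder of $S$ and matching it against the normalization \eqref{1overw} so that the constant comes out exactly $1/|w|$ rather than something larger; the other steps are essentially bookkeeping with the three lemmas already proved.
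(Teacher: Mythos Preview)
Your proposal is correct and follows essentially the same route as the paper's own proof: invoke Lemma~\ref{VofP} to write $\phi=PS+cP$, use Lemma~\ref{ConvS} to get existence of $\lim S(t)$ and hence of $L=\lim\phi/P$, identify $\phi-x$ with the tail of $S$, bound it via \eqref{1overw}, and deduce uniqueness from $|P(t)|\to\infty$. The only cosmetic difference is that the paper argues uniqueness by contradiction (two distinct solutions $z_1=c_1P$, $z_2=c_2P$ force $|c_2-c_1||P(t)|\le 2K\varepsilon$), while you argue it directly; the substance is identical.
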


\begin{proof}
Let $w\in\C\backslash\{0\}$ satisfy condition \eqref{wrestriction}. Let $\varepsilon>0$ be given. Suppose that $\left|D_q\phi(t) - w \left\langle \phi(t)\right\rangle_{\eta}\right|\le\varepsilon$ for all $t\in\T$. Let $E(t) := D_q\phi(t) - w \left\langle \phi(t)\right\rangle_{\eta}$. Then
\[ D_q\phi(t) - w \left\langle \phi(t)\right\rangle_{\eta} = E(t), \quad |E(t)|\le\varepsilon \]
for all $t\in\T$. Lemma \ref{VofP} implies that $P(t)S(t)$ is a solution of this equation, where $P(t)$ and $S(t)$ are given in \eqref{phiform}. Since $P(t)$ is a solution of \eqref{maineq} by Lemma \ref{GeneSol}, we can write $\phi:\T\rightarrow\C$ as
\[ \phi(t) := \phi_0P(t)+P(t)S(t), \]
where $\phi_0\in\C$ is a suitable constant, by Lemma \ref{VofP}. Note here that $P(t)\neq 0$ for all $t\in\T$ since $w\in\C\backslash\{0\}$ satisfying \eqref{wrestriction}. From Lemma \ref{ConvS} the $\lim_{t\to\infty}S(t)$ exists, so that 
\[ x_0:= \lim_{t\to\infty}\frac{\phi(t)}{P(t)} = \phi_0+\lim_{t\to\infty}S(t) \]
exists. Moreover, we can rewrite $\phi$ as
\begin{eqnarray*}
 \phi(t) &=& \left(\phi_0+\lim_{t\to\infty}S(t)\right)P(t)+P(t)\left(S(t)-\lim_{t\to\infty}S(t)\right)\\
  &=& x_0P(t)-P(t)\sum_{m=\log_q t}^{\infty} \frac{(q-1)q^mE(q^m)}{[1+w(1-\eta)(q-1)q^m]P(q^{m})}
\end{eqnarray*}
for all $t\in\T$. Now we consider the function
\[ x(t):=x_0P(t). \]
Then, $x$ is a solution of \eqref{maineq} by Lemma \ref{GeneSol}. Hence, we have
\begin{eqnarray*}
 |\phi(t)-x(t)| &=& \left|-P(t)\sum_{m=\log_q t}^{\infty} \frac{(q-1)q^mE(q^m)}{[1+w(1-\eta)(q-1)q^m]P(q^{m})}\right|\\
  &\le& \varepsilon \left|P(t)\sum_{m=\log_q t}^{\infty} \frac{(q-1)q^m}{[1+w(1-\eta)(q-1)q^m]P(q^{m})}\right| \\
	&=& \frac{\varepsilon}{|w|}
\end{eqnarray*}
for all $t\in\T$ from Lemma \ref{ConvS}. As a result, \eqref{maineq} has Hyers--Ulam stability with HUS constant
\[ K:=\frac{1}{|w|}. \]

Next, we will show that $x(t)=x_0P(t)$ is the unique solution of \eqref{maineq} with $|\phi(t)-x(t)|\le K\varepsilon$ for all $t\in\T$. Suppose that 
\[ \left|D_q\phi(t) - w \left\langle \phi(t)\right\rangle_{\eta}\right|\le\varepsilon \]
for all $t\in\T$, and that there exist two different solutions $z_1$ and $z_2$ of \eqref{maineq} with $|\phi(t)-z_l(t)|\le K\varepsilon$ for $t\in\T$ and $l=1$, $2$. Then, we can rewrite $z_l$ as
\[ z_l(t) = c_lP(t), \quad c_1 \neq c_2. \]
Henc,e we see that
\[ |c_2-c_1||P(t)| = |z_2(t)-z_1(t)| \le |z_2(t)-\phi(t)|+|\phi(t)-z_1(t)| \le 2 K\varepsilon \]
for all $t\in\T$. Using Lemma \ref{ConvS}, we have $\lim_{t\to\infty}|P(t)|=\infty$; this is a contradiction. Thus, $x(t)=x_0P(t)$ is the unique solution. 
\end{proof}

Following the main theorem above, we can affirm that the HUS constant of $K=\frac{1}{|w|}$ is the best possible constant for 
$\eta\in\left[0,\frac{1}{2}\right)$ and $w\in\C\backslash\{0\}$ satisfying \eqref{wrestriction}. 


\begin{theorem}\label{bestthm}
Let $q>1$, $\eta\in\left[0,\frac{1}{2}\right)$, and let $w\in\C\backslash\{0\}$ satisfy \eqref{wrestriction}. Then \eqref{maineq} has HUS with best (minimal) HUS constant $\frac{1}{|w|}$ on $\T$.
\end{theorem}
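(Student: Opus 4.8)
The plan is to lean on Theorem \ref{mainthm}, which already produces $\frac{1}{|w|}$ as an HUS constant for \eqref{maineq}; what remains is only to establish its minimality, i.e.\ that no $K_0$ with $0<K_0<\frac{1}{|w|}$ can be an HUS constant. For this it suffices to exhibit, for some $\varepsilon>0$, a single admissible approximate solution $\phi$ for which no genuine solution of \eqref{maineq} stays within $K_0\varepsilon$ of it on all of $\T$.

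I would take $\varepsilon>0$ arbitrary and let $\phi$ be the constant function $\phi(t)\equiv-\frac{\varepsilon}{w}$ on $\T$. Since $D_q\phi\equiv 0$ and $\langle\phi\rangle_{\eta}\equiv-\frac{\varepsilon}{w}$, a direct check gives $D_q\phi(t)-w\langle\phi(t)\rangle_{\eta}=\varepsilon$, so $\phi$ satisfies \eqref{phiepineq} (with equality) and is an admissible approximate solution; note in particular $\phi$ is itself not a solution of \eqref{maineq}. By Lemma \ref{GeneSol} every solution of \eqref{maineq} has the form $x=cP$ for some $c\in\C$, with $P$ as in \eqref{phiform}. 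Now compare $\phi$ with such an $x$: if $c\neq 0$, then $\lim_{t\to\infty}|P(t)|=\infty$ by Lemma \ref{ConvS}, hence $|\phi(t)-x(t)|=\left|\frac{\varepsilon}{w}+cP(t)\right|\to\infty$ and the bound $|\phi(t)-x(t)|\le K_0\varepsilon$ cannot hold for all $t\in\T$; if $c=0$, then $|\phi(t)-x(t)|=\frac{\varepsilon}{|w|}>K_0\varepsilon$ for every $t\in\T$, because $K_0<\frac{1}{|w|}$. In either case there is no solution $x$ of \eqref{maineq} with $|\phi(t)-x(t)|\le K_0\varepsilon$ on $\T$, so $K_0$ is not an HUS constant.

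Combining this with Theorem \ref{mainthm}, the HUS constants of \eqref{maineq} are exactly the numbers $K\ge\frac{1}{|w|}$, so $\frac{1}{|w|}$ is the best (minimal) one, which is the assertion. I do not expect a serious obstacle here beyond Theorem \ref{mainthm} itself; the only points requiring care are verifying that the chosen constant $\phi$ genuinely satisfies the perturbed inequality and that no nonzero multiple of $P$ can remain bounded near $\phi$, so that the best-approximating solution is forced to be the zero solution, whose uniform distance from $\phi$ on $\T$ is exactly $\frac{\varepsilon}{|w|}$.
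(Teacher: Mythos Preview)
Your proposal is correct and follows essentially the same approach as the paper: both pick the constant approximate solution $\phi\equiv\pm\frac{\varepsilon}{w}$, observe it satisfies \eqref{phiepineq} with equality, and conclude that the nearest genuine solution is the zero solution at exact distance $\frac{\varepsilon}{|w|}$. The only cosmetic difference is that the paper invokes the uniqueness clause of Theorem~\ref{mainthm} to single out $x\equiv 0$, whereas you argue directly by cases on $c$ using $\lim_{t\to\infty}|P(t)|=\infty$ from Lemma~\ref{ConvS}; the substance is the same.
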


\begin{proof}
Given an arbitrary $\varepsilon>0$, set
\[ \phi(t)\equiv \frac{\varepsilon}{w}, \]
where we assume $q>1$, $\eta\in\left[0,\frac{1}{2}\right)$, and $w\in\C\backslash\{0\}$ satisfies \eqref{wrestriction}. Then, $\phi$ satisfies
\[ \left|D_q\phi(t)-w\langle\phi(t)\rangle_{\eta}\right| = |-\varepsilon| = \varepsilon, \]
and 
\[ x(t):=\left(\lim_{t\to\infty}\frac{\phi(t)}{P(t)}\right)P(t)= 0 \] 
is the unique solution of \eqref{maineq} with 
$$|\phi(t)-x(t)| \le \frac{\varepsilon}{|w|} $$ 
for all $t\in\T$, by Theorem \ref{mainthm}. This means that the best HUS constant is at least $\frac{1}{|w|}$, whereas Theorem \ref{mainthm} says it is at most $\frac{1}{|w|}$. Therefore, $\frac{1}{|w|}$ is the best possible HUS constant.
\end{proof}


\section{The $\eta=\frac{1}{2}$ case leads to instability across the complex plane}

The results in the previous sections are for $\eta\in[0,\frac{1}{2})$. What happens if $\eta=\frac{1}{2}$? In stark contrast to the results contained in Lemma \ref{noHUS} and Theorem \ref{bestthm}, in which \eqref{maineq} has HUS with best HUS constant $\frac{1}{|w|}$ for all $w\in\C$ satisfying \eqref{wrestriction} except for $w=0$, equation \eqref{maineq} has no Hyers--Ulam stability for any $w\in\C$ for those $w$ that satisfy \eqref{wrestriction}, when $\eta=\frac{1}{2}$. 


\begin{theorem}\label{etahalfnoHUS}
Let $\eta=\frac{1}{2}$ and $q>1$. Then for all $w\in\C$ satisfying \eqref{wrestriction}, equation \eqref{maineq} has no Hyers--Ulam stability.
\end{theorem}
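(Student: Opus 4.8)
The plan is to follow the template of Lemma \ref{noHUS}, constructing an explicit perturbation $\phi$ that drifts away from every genuine solution of \eqref{maineq}; but the mechanism is now subtler than in the $w=0$ case, because the drift is produced by an unbounded ``variation of parameters'' sum $S$ rather than by a divergent factor $P$.

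First I would note that Lemmas \ref{GeneSol} and \ref{VofP} remain valid when $\eta=\frac12$. With $\eta=\frac12$ the restriction \eqref{wrestriction} reads $w\in\C\setminus\bigl\{\pm\tfrac{2}{(q-1)q^k}\bigr\}_{k=0}^{\infty}$, which is exactly what keeps every factor $1\pm\tfrac{w}{2}(q-1)q^k$ nonzero; hence the recurrence relations and the formulas
\[ P(t)=\prod_{k=0}^{\log_q t-1}\frac{1+\tfrac{w}{2}(q-1)q^k}{1-\tfrac{w}{2}(q-1)q^k},\qquad S(t)=\sum_{m=0}^{\log_q t-1}\frac{(q-1)q^m E(q^m)}{[1+\tfrac{w}{2}(q-1)q^m]\,P(q^m)} \]
are well defined, $P(t)\neq0$ on $\T$, and $\phi:=PS$ satisfies $D_q\phi(t)-w\langle\phi(t)\rangle_{1/2}=E(t)$ by the same computation as in the proof of Lemma \ref{VofP} (that computation never used $\eta<\frac12$).

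The crux — and the place where the behaviour genuinely departs from Section 2 — is to show that $\lim_{t\to\infty}|P(t)|=:L$ exists with $0<L<\infty$. Writing $u_k:=\tfrac{w}{2}(q-1)q^k$, each factor of $P$ satisfies $\bigl|\tfrac{1+u_k}{1-u_k}\bigr|=\bigl|\tfrac{1+1/u_k}{1-1/u_k}\bigr|$, so $\log\bigl|\tfrac{1+u_k}{1-u_k}\bigr|=2\,\mathrm{Re}(1/u_k)+O(|u_k|^{-3})=O(q^{-k})$ when $w\neq0$; thus $\sum_k\log\bigl|\tfrac{1+u_k}{1-u_k}\bigr|$ converges absolutely and $|P(q^N)|\to L\in(0,\infty)$. (When $w=0$ this is immediate, since $|P|\equiv1$.) In contrast to the $\eta<\frac12$ case, where $|P|\to\infty$ forced boundedness and uniqueness, here $|P|$ is bounded above and away from zero, so any divergence must be manufactured inside $S$; this is exactly why the argument of Lemma \ref{ConvS} breaks down (its ratio-test limit $\tfrac{\eta}{1-\eta}$ equals $1$ at $\eta=\tfrac12$). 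I expect establishing the existence and positivity of $L$ to be the main obstacle, since everything downstream depends on it.

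Finally I would assemble the counterexample. Fix $\varepsilon>0$, set $\zeta_m:=\frac{(q-1)q^m}{[1+\tfrac{w}{2}(q-1)q^m]P(q^m)}\neq0$, and define $E$ on $\T$ by $E(q^m):=\varepsilon\,\overline{\zeta_m}/|\zeta_m|$, so that $|E(q^m)|=\varepsilon$ and the $m$-th term of $S$ equals $\varepsilon|\zeta_m|$. Since $|1+\tfrac{w}{2}(q-1)q^m|\sim\tfrac{|w|}{2}(q-1)q^m$ (or $\equiv1$ if $w=0$) and $|P(q^m)|\to L$, we get $|\zeta_m|\to\tfrac{2}{|w|L}>0$ (or $|\zeta_m|\to\infty$ if $w=0$); in either case $|\zeta_m|\not\to0$, hence $S(q^N)=\varepsilon\sum_{m=0}^{N-1}|\zeta_m|\to\infty$. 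Put $\phi:=PS$. Then $|D_q\phi(t)-w\langle\phi(t)\rangle_{1/2}|=\varepsilon$ for all $t\in\T$, while by Lemma \ref{GeneSol} every solution of \eqref{maineq} has the form $x=cP$, and for all large $N$
\[ |\phi(q^N)-x(q^N)|=|P(q^N)|\,|S(q^N)-c|\ge\tfrac{L}{2}\bigl(|S(q^N)|-|c|\bigr)\longrightarrow\infty . \]
Hence $\sup_{t\in\T}|\phi(t)-x(t)|=\infty$ for every solution $x$, so no $K>0$ can satisfy the defining inequality, and \eqref{maineq} has no Hyers--Ulam stability. The only routine items left are the asymptotic expansion of $\log|(1+u_k)/(1-u_k)|$ and the verification that $\phi=PS$ reproduces the prescribed defect $E$.
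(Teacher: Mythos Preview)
Your proposal is correct and follows essentially the same route as the paper: build $\phi=PS$ via Lemma~\ref{VofP} with a specific $E$ of modulus $\varepsilon$, show that at $\eta=\tfrac12$ the modulus $|P|$ tends to a positive finite limit while the sum $S$ diverges, and conclude $|\phi-cP|\to\infty$ for every $c\in\C$. The only differences are cosmetic --- the paper chooses $E=\varepsilon P/|P|$ and phrases the behaviour of $P$ as convergence to a two-cycle $\pm p^{*}$, whereas you choose $E=\varepsilon\,\overline{\zeta}/|\zeta|$ (so the summands of $S$ are positive reals) and establish $|P|\to L\in(0,\infty)$ via a log-series expansion.
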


\begin{proof}
Let $w\in\C$ that satisfies \eqref{wrestriction} and an arbitrary $\varepsilon>0$ be given. For $t\in\T$, let $\phi$ have the form \eqref{phiform} for this $w$, and set 
\[ E(t)=\frac{\varepsilon P(t)}{|P(t)|}, \]
where $P$ has the form \eqref{generalsolformx} and is a solution of \eqref{maineq}. Clearly, $|E(t)|=\varepsilon$ in this instance. Then, as in the proof of Lemma \ref{VofP}, it follows that
\begin{eqnarray*}
	D_q \phi(t) - w\left\langle \phi(t) \right\rangle_{\frac{1}{2}} = E(t),
\end{eqnarray*}
and thus $\phi$ satisfies the equality
\[ \left|D_q \phi(t) - w\left\langle \phi(t) \right\rangle_{\frac{1}{2}}\right| = \varepsilon \]
for all $t\in\T$. From \eqref{phiform}, we have
\[ P(t)=\prod_{k=0}^{\log_q t-1} \frac{1+\frac{w}{2}(q-1)q^k}{1-\frac{w}{2}(q-1)q^k} \qquad\text{and}\qquad S(t)=\sum_{m=0}^{\log_q t-1} \frac{\varepsilon(q-1)q^m}{[1+\frac{w}{2}(q-1)q^m]|P(q^{m})|}. \]
If $w=0$, then 
\[ P(t)=\prod_{k=0}^{\log_q t-1} 1\equiv 1 \qquad\text{and}\qquad S(t)=\sum_{m=0}^{\log_q t-1} \frac{\varepsilon(q-1)q^m}{|P(q^{m})|}=\varepsilon(q-1)\sum_{m=0}^{\log_q t-1}q^m, \]
and we see that $S(t)$ diverges to infinity as $t\rightarrow\infty$, since $q>1$. 
If $w\ne 0$, then by focusing on $P(t)$ we note that 
\[ \lim_{k\rightarrow\infty} \frac{1+\frac{w}{2}(q-1)q^k}{1-\frac{w}{2}(q-1)q^k}=-1 \]
for any $w\in\C$ satisfying \eqref{wrestriction} and for any $q>1$, and $P(t)$ converges to a two-cycle $\pm p^*$ for some $p^*\in\C\backslash\{0\}$, as $t\rightarrow\infty$. Moreover, for $S(t)$, we have
\[ \lim_{m\rightarrow\infty}\frac{\varepsilon(q-1)q^m}{[1+\frac{w}{2}(q-1)q^m]|P(q^{m})|} = \frac{2\varepsilon}{w|p^*|} \not= 0, \]
so that $S(t)$ diverges. Therefore, for any $w\in\C$ that satisfies \eqref{wrestriction}, we have that $P(t)$ is bounded away from 0, and $S(t)$ grows without bound in the limit at infinity. Since \eqref{generalsolformx} contains the form of the general solution of \eqref{maineq}, then
\[ |\phi(t)-x(t)| = |P(t)S(t)-cP(t)| = |P(t)||S(t)-c| \rightarrow \infty \]
as $t\rightarrow\infty$ for $t\in\T$ and for any $c\in\C$. In this case, by definition \eqref{maineq} lacks HUS on $\T$ for any $w\in\C$ that satisfies \eqref{wrestriction}. 
\end{proof}

\begin{example}
Let $\eta=\frac{1}{2}$ and $q>1$. If $\operatorname{Re}(w)=0$, then $w=i\beta$ for some $\beta\in\R$ and $w$ satisfies \eqref{wrestriction}.
Let an arbitrary $\varepsilon>0$ be given, and for $t\in\T$ let $\phi$ have the form \eqref{phiform} with $w=i\beta$ and $E(t)=\varepsilon P(t)/|P(t)|$.
As $P$ has the form \eqref{generalsolformx} and
\[ \left|\frac{1+\frac{i\beta}{2}(q-1)q^k}{1-\frac{i\beta}{2}(q-1)q^k}\right| = 1 \]
for all $k$, we have that $P$ is a solution of \eqref{maineq} with $|P(t)|=1$ for all $t\in\T$. If $w=10i$ and $q=2$, for example, then $P$ converges to the two-cycle $\{p^*,-p^*\}$, where $p^*=0.700975 - 0.713186 i$. Then, as in the proof of Lemma \ref{VofP} with $w=i\beta$ and $E(t)=\varepsilon P(t)/|P(t)|$, it follows that
\begin{eqnarray*}
 D_q \phi(t) - i\beta \left\langle \phi(t) \right\rangle_{\frac{1}{2}} = E(t),
\end{eqnarray*}
and thus $\phi$ satisfies the equality
\[ \left|D_q \phi(t) - i\beta \left\langle \phi(t) \right\rangle_{\frac{1}{2}}\right| = \varepsilon \]
for all $t\in\T$. If $\beta=0$, then 
$$ S(t)=\varepsilon(q-1)\sum_{m=0}^{\log_q t-1} q^m $$
diverges as $t\rightarrow\infty$; for $\beta\ne 0$,
\[ \left|\frac{q^m}{2+i\beta(q-1)q^m}\right| = \frac{q^m}{\sqrt{4+\beta^2(q-1)^2q^{2m}}} = \frac{1}{\sqrt{\frac{4}{q^{2m}}+\beta^2(q-1)^2}}, \]
which does not vanish as $t\rightarrow\infty$, and again $S(t)$ diverges at infinity.
Since \eqref{generalsolformx} contains the form of the general solution of \eqref{maineq}, then
\[ |\phi(t)-x(t)| = |P(t)S(t)-cP(t)| = |S(t)-c| \rightarrow \infty \]
as $t\rightarrow\infty$ for $t\in\T$ and for any $c\in\C$. In this case, by definition \eqref{maineq} lacks HUS on $\T$, as predicted by Theorem \ref{etahalfnoHUS}. 
\end{example}

\begin{example}
Let $\eta=\frac{1}{2}$. For $w=1-2i$ and $q=2.5$, the function $P$ converges to the two-cycle $\pm p^*$, for $p^*=-0.35346 + 2.11351 i$. For $w=-2+5i$ and $q=1.5$, $P$ converges to the two-cycle $\pm p^*$, for $p^*=-0.170672 + 0.183965 i$. If $w=-3$ and $q=2$, then $P$ converges to $\pm 0.0511582$. Finally, if $w=\pi$ and $q=1.8$, then $P$ converges to $\pm 69.4908$. In all of these cases, the sum $S$ diverges, and \eqref{maineq} is not Hyers-Ulam stable by Theorem \ref{etahalfnoHUS}.
\end{example}


\section{Future Directions}

A Hyers--Ulam stability analysis for the first-order quantum equation is presented here for the first time, yielding a sharp (minimal) Hyers--Ulam constant. In the future, one would like to build on these results for first-order quantum equations to get results for second-order quantum equations with constant coefficients.

\section*{Acknowledgements}
The authors would like to thank the referees for reading carefully and giving valuable comments to improve the quality of the paper. The second author was supported by JSPS KAKENHI Grant Number JP20K03668.


\end{document}